\newtheorem{theorem}{Theorem}[section]
\newtheorem{corollary}[theorem]{Corollary}
\theoremstyle{definition}
\theoremstyle{remark}
\newcommand{\Mod}{\mathrm{Mod}}
\newcommand{\T}{\mathcal{T}}
\newcommand{\M}{\mathcal{M}}
\newcommand{\diam}{\mathrm{diam}}
\newcommand{\X}{\mathcal{X}}
\newcommand{\Y}{\mathcal{Y}}
\newcommand{\N}{\mathcal{N}}
\newcommand{\A}{\mathcal{A}}
\newcommand{\axis}{\mathrm{axis}}
\definecolor{OliveGreen}{rgb}{0,0.6,0}
\definecolor{BrightPurple}{rgb}{1,0.1,1}
\begin{document}

\title{Growth of Pseudo-Anosov Conjugacy Classes in Teichm\"{u}ller Space}
\author{Jiawei Han}
\address{Department of Mathematics, Vanderbilt University, Nashville, Tennessee 37235}
\email{jiawei.han@vanderbilt.edu}
\maketitle

\begin{abstract}
	Athreya, Bufetov, Eskin and Mirzakhani \cite{athreya2012lattice} have shown the number of mapping class group lattice points intersecting a closed ball of radius $R$ in Teichm\"{u}ller space is asymptotic to $e^{hR}$, where $h$ is the dimension of the Teichm\"{u}ller space. We show for any pseudo-Anosov mapping class $f$, there exists a power $n$, such that the number of lattice points of the $f^n$ conjugacy class intersecting a closed ball of radius $R$ is coarsely asymptotic to $e^{\frac{h}{2}R}$.
\end{abstract}

\section{Introduction}

One can study a group by understanding its ``growth'' in various ways. Consider $G$ acting on a metric space $S$ by isometries, one can measure the number of orbit or lattice points of $G$ in a ball of radius $R$ as $R$ goes to infinity. For example, consider $\mathbb{Z}^3$ acting on $\mathbb{R}^3$ in the standard way, the number of lattice points of $\mathbb{Z}^3$ in a ball of radius $R$ is roughly the volume of this ball, see \cite{lax1982asymptotic} for example. In this paper, we study mapping class groups by understanding the lattice points of pseudo-Anosov conjugacy classes in Teichm\"{u}ller space.

Let $M$ be a compact, negatively curved Riemannnian manifold and let $\tilde{M}$ denote its universal cover. The fundamental group $\pi_1(M)$ acts on $\tilde{M}$ by isometries. Let $B_R(x)$ denote the ball of radius $R$ in $\tilde{M}$ centered at $x$. G.A. Margulis studied the growth rate of any orbit $\pi_1(M) \cdot y$ by intersecting with any metric balls $B_r(x)$.
\begin{theorem}[Margulis \cite{margulis2004some}]
	There is a function $c \colon M \times M \to \mathbb{R}^+$ so that for every $x, y \in \tilde{M}$,
	\begin{align*}
		|\pi_1(M) \cdot y \cap B_R(x)| \sim c(p(x),p(y))e^{hR}
	\end{align*}
	where $h$ equals to the dimension of the manifold, which is the topological entropy of the geodesic flow on the unit tangent bundle of $M$. 
\end{theorem}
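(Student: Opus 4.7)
The plan is to reduce the orbit-counting problem to a mixing statement for the geodesic flow on the unit tangent bundle $T^1 M$, following Margulis's original strategy. Since $M$ is compact and negatively curved, the Cartan--Hadamard theorem guarantees a unique geodesic in $\tilde{M}$ from $x$ to each translate $g\cdot y$, so counting lattice points of $\pi_1(M)\cdot y$ in $B_R(x)$ is equivalent to counting geodesic arcs in $M$ of length $\le R$ joining $p(x)$ to $p(y)$. This recasts the problem as a dynamical counting question for the geodesic flow $\phi_t$ on $T^1 M$, which is a topologically mixing Anosov flow of topological entropy $h$.

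First I would construct the Bowen--Margulis measure of maximal entropy $\mu_{BM}$ on $T^1 M$. In the compact negatively curved setting this can be obtained from Margulis's conditional densities on strong unstable leaves: a family $\{\mu^u_v\}$ with $(\phi_t)_*\mu^u_v = e^{ht}\mu^u_{\phi_t v}$, together with a symmetric family $\{\mu^s_v\}$ on strong stable leaves, whose local product with Lebesgue along the flow direction is $\mu_{BM}$. The key ergodic input I will invoke is that $(\phi_t,\mu_{BM})$ is mixing; an equivalent route is via Patterson--Sullivan conformal densities on the boundary $\partial\tilde{M}$.

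Next I would smooth the counting function. Choose nonnegative bump functions $\alpha,\beta$ on $M$ with small supports around $p(x),p(y)$, and let $\tilde\alpha,\tilde\beta$ denote their lifts to $\tilde{M}$. Consider
\begin{align*}
N_{\alpha,\beta}(R) \;=\; \sum_{g\in\pi_1(M)} \int_{\tilde M\times\tilde M} \tilde\alpha(z)\,\tilde\beta(gw)\,\mathbf{1}_{[0,R]}\!\bigl(d(z,gw)\bigr)\,dz\,dw.
\end{align*}
Parametrising each pair $(z,gw)$ by spherical coordinates at $z$ (initial direction $v\in T^1_z\tilde M$ and length $t$) and projecting to $T^1 M$, one rewrites this as
\begin{align*}
N_{\alpha,\beta}(R) \;=\; \int_0^R \!\!\int_{T^1 M} A(v)\,B(\phi_t v)\,J(v,t)\,dm(v)\,dt,
\end{align*}
where $A,B$ are built from $\alpha,\beta$ and $J$ is the Jacobian of $\exp_z$, which in Margulis coordinates (strong unstable $\times$ flow $\times$ strong stable) factors as $e^{ht}$ times a bounded density coming from the leaf measures. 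Substituting and applying mixing of $\phi_t$ against $\mu_{BM}$ gives $e^{-hR}N_{\alpha,\beta}(R)\to \tilde c_1(\alpha)\,\tilde c_2(\beta)$, a factorised limit.

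Finally I would remove the smoothing by letting the supports of $\alpha,\beta$ shrink to $p(x),p(y)$, which converts $\tilde c_1(\alpha)\,\tilde c_2(\beta)$ into the value $c(p(x),p(y))$ of a continuous function and yields the stated asymptotic. The hard part will be the identification of the Jacobian of $\exp_z$ with $e^{ht}$ times the Margulis leaf density, and controlling the error uniformly to pass from the smoothed counts to the unsmoothed count $|\pi_1(M)\cdot y\cap B_R(x)|$; this is the geometric core of Margulis's argument and rests on the asymptotic behaviour of horospheres in negative curvature, together with the exponential mixing (or at least effective mixing) of the geodesic flow.
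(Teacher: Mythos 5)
This theorem is not proved in the paper at all: it is quoted from Margulis \cite{margulis2004some} as background motivation, so there is no in-paper argument to compare yours against. Judged on its own terms, your sketch follows Margulis's classical strategy (lattice points $\leftrightarrow$ geodesic arcs from $p(x)$ to $p(y)$, Bowen--Margulis measure via the expanding conditional measures $(\phi_t)_*\mu^u_v = e^{ht}\mu^u_{\phi_t v}$, mixing of the flow, smoothing and then unsmoothing the counting function), and this is indeed how the theorem is proved in the literature. Two corrections to the details: plain mixing of $(\phi_t,\mu_{BM})$ suffices for the stated asymptotic; exponential or effective mixing is only needed if one wants an error term, so your closing caveat overstates what is required.

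The one step you should not wave through is the assertion that the Jacobian $J(v,t)$ of $\exp_z$ ``factors as $e^{ht}$ times a bounded density coming from the leaf measures.'' In variable negative curvature the Riemannian Jacobian of the exponential map is governed by the unstable Jacobian of the flow, whose exponential growth rate is a priori the volume entropy, and its pointwise behaviour has no direct relation to the Margulis conditionals; the clean $e^{ht}$-scaling is a property of the measures $\mu^u_v$ (by construction), not of Riemannian volume. Asserting the factorisation at this stage essentially presupposes Margulis's sphere-volume asymptotics $\mathrm{vol}\,S(x,t)\sim c(x)e^{ht}$, which is itself an output of the same mixing machinery. The standard repair is to smooth against the Bowen--Margulis structure rather than $dz\,dw$: partition large spheres (or pieces of unstable horospheres) into flow boxes, use the exact $e^{ht}$-scaling and holonomy quasi-invariance of the Margulis conditionals, and deduce from mixing that the normalized push-forwards of sphere measures equidistribute towards $\mu_{BM}$; the constant $c(p(x),p(y))$ then emerges as a product of Margulis-type functions of the two points. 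With that substitution, and the unsmoothing argument you already indicate (sandwiching between balls of radius $R\pm\delta$ and letting the bump supports shrink), the outline is a faithful reconstruction of Margulis's proof.
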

The notation $f(R) \sim g(R)$ means $\lim_{R \to \infty} \frac{f(R)}{g(R)} = 1$.

Inspired by this result, Athreya, Bufetov, Eskin and Mirzakhani studied lattice point asymptotics in Teichm\"{u}ller space. Let $S_{g,n}$ denote a closed surface of genus $g$ with $n$ punctures such that $3g-3+n > 0$, and we let $\Mod_{g,n}$ and $(\T_{g,n},d)$ denote the corresponding mapping class group and Teichm\"{u}ller space with Teichm\"{u}ller metric. Then $\Mod_{g,n}$ acts on $\T_{g,n}$ by isometries. We use $\Mod_g, \T_g$ to denote $\Mod_{g,0}, \T_{g,0}$ for simplicity. They showed the orbits of mapping class group have analogous asymptotics.

\begin{theorem}[Athreya, Bufetov, Eskin and Mirzakhani \cite{athreya2012lattice}] \label{ABEM}
	For any $\X, \Y \in \mathcal T_g$, we have
	\begin{align*}
		| \Mod_g \cdot \Y \cap B_R(\X)| \sim e^{hR}
	\end{align*}
\end{theorem}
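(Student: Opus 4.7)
The plan is to adapt Margulis's lattice counting strategy for negatively curved manifolds to the Teichmüller setting, with the Teichmüller geodesic flow on the moduli space of unit-area quadratic differentials playing the role of the Riemannian geodesic flow on the unit tangent bundle. First I would replace the sharp count $|\Mod_g \cdot \Y \cap B_R(\X)|$ by a ``thickened'' version: for a small $\epsilon>0$, let $\chi_\epsilon$ denote a bump function supported in the $\epsilon$-ball of $\T_g$ and set $F_R(\Z) := \sum_{g \in \Mod_g} \chi_\epsilon(d(g\Y,\Z))$. The integral of $F_R$ against the indicator of $B_R(\X)$ is sandwiched between constant multiples of $|\Mod_g \cdot \Y \cap B_{R\pm\epsilon}(\X)|$, so it suffices to analyze this integral and then let $\epsilon \to 0$.

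Next, I would reparameterize $B_R(\X)$ in polar coordinates based on the Teichmüller flow: every point at distance $r$ from $\X$ lies on a unique Teichmüller geodesic ray from $\X$, determined by an initial unit-area quadratic differential over $\X$. The central dynamical inputs are: (i) the volume element in these polar coordinates grows like $e^{hr}$ as $r \to \infty$, where $h = 6g-6$; and (ii) the Teichmüller flow on the moduli space of unit-area quadratic differentials is mixing with respect to the Masur--Veech measure, by results of Veech and Masur. Combined, these let one translate mixing into equidistribution of the normalized thickened orbit measure $e^{-hR}F_R$ on $B_R(\X)$, and the equidistributed limit integrates to the sought constant multiple of $e^{hR}$.

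The main obstacle is that Teichmüller space fails to be negatively curved in any classical sense, so the direct Margulis arguments using exponential divergence of geodesics do not apply. In particular, many initial directions at $\X$ correspond to non-uniquely-ergodic foliations and fail to recur to any compact set; such directions must be excluded using Masur's criterion. More seriously, one must control excursions of geodesic segments joining $\X$ to $g\Y$ into the thin part of moduli space, so that the count is effectively concentrated along recurrent trajectories where mixing has bite. The finiteness of the Masur--Veech volume together with quantitative recurrence estimates for almost every trajectory are what make this bootstrap viable.

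Finally, to upgrade soft equidistribution to the sharp asymptotic $\sim e^{hR}$, one must identify the limiting constant with the expected geometric quantity coming from the Masur--Veech measure. I expect this matching-of-constants step to be the hardest part: it requires a sector decomposition of the ``unit tangent sphere'' at $\X$ together with either a quantitative rate of mixing or, as in the ABEM paper, a careful qualitative argument to propagate the mixing statement through the change of variables and the sum over $\Mod_g$, while keeping the error from the thin part negligible as $R \to \infty$.
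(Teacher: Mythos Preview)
The paper does not prove this theorem at all: Theorem~\ref{ABEM} is quoted as a result of Athreya, Bufetov, Eskin and Mirzakhani and is used as a black box (specifically, to bound $|\Omega_r(\X)|$ at the start of Section~3). There is therefore no ``paper's own proof'' to compare your proposal against.

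That said, your outline is a fair high-level sketch of the strategy actually used in the cited ABEM paper: replace the sharp count by a smoothed sum, express the ball integral in Teichm\"uller polar coordinates, and then feed in mixing of the Teichm\"uller geodesic flow on the moduli space of quadratic differentials together with the $e^{hr}$ Jacobian of the flow. Your caveats about the thin part and non-uniquely-ergodic directions are also the right ones. Two remarks if you intend to flesh this out: first, the volume growth statement and the polar-coordinate Jacobian are themselves nontrivial results proved in the ABEM paper, not something you can take for granted; second, the identification of the limiting constant passes through the Hubbard--Masur function $\Lambda$, which in ABEM's formulation appears as $\Lambda(\X)\Lambda(\Y)$ and only later (via Mirzakhani's work) is known to be constant, so your endgame ``matching of constants'' would have to recover that structure. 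But none of this is relevant to the present paper, which simply invokes the theorem.
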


Note in their original paper, there is a factor of $\Lambda(\X) \Lambda(\Y)$ in front of $ e^{\frac{h}{2}R}$, $\Lambda$ is called the Hubbard-Masur function. Mirzakhani later showed that $\Lambda$ is a constant function, see \cite{10.1007/s11511-015-0129-6}. Moreover, we recall the following result from Parkkonen and Paulin \cite{parkkonen2015hyperbolic} about the lattice point asymptotics for conjugacy classes of $\pi_1(M)$.

\begin{theorem}[Parkkonen, Paulin \cite{parkkonen2015hyperbolic}] \label{conjugacy}
	Let $G$ be a nontrivial conjugacy class of $\pi_1(M)$, for any $x \in \tilde{M}$, we have
	\begin{align*}
		\lim_{R \to \infty} \frac{1}{R}\ln |G \cdot \X \cap B_R(\X) | = \frac{h}{2}.
	\end{align*}
\end{theorem}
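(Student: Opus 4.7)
\medskip
\noindent\textbf{Proof plan.} The plan is to identify the conjugacy class $G$ with the coset space $\pi_1(M)/C$, where $C = \centralizer(g)$ for a chosen representative $g \in G$, and then to reduce the conjugacy class count to a $\pi_1(M)$-orbit counting problem in a tubular neighborhood of $\axis(g)$. Since $M$ is compact and negatively curved, $g$ acts loxodromically on $\tilde M$ with translation axis $\axis(g)$ and translation length $\ell(g) > 0$, while $C$ is virtually cyclic and acts cocompactly on $\axis(g)$ with a fundamental segment of length $\ell(g)$. Using that $\pi_1(M)$ acts by isometries,
\[
d(\X, h g h^{-1} \X) \;=\; d(h^{-1}\X,\, g \cdot h^{-1}\X),
\]
so the count $|G \cdot \X \cap B_R(\X)|$ is, up to a bounded multiplicative factor, the number of cosets $[h] \in \pi_1(M)/C$ for which the point $y = h^{-1}\X$ satisfies $d(y, gy) \le R$.

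The second step is the standard negative-curvature estimate: if $r = d(y, \axis(g))$, then
\[
d(y, gy) = 2r + \ell(g) + O(1) \qquad \text{as } r \to \infty.
\]
In constant curvature $-1$ this comes from the identity $\cosh(d(y,gy)/2) = \cosh(r)\cosh(\ell(g)/2)$; in pinched negative curvature it follows from convexity of Busemann functions together with control of the nearest-point projection to $\axis(g)$. Consequently, $d(\X, h g h^{-1}\X) \le R$ is equivalent, up to an additive constant depending only on $g$, to $d(h^{-1}\X, \axis(g)) \le R/2 - \ell(g)/2$. Thus the quantity to estimate is coarsely the number of $\pi_1(M)$-orbit points of $\X$ that lie in the tube of radius $R/2$ around a fixed fundamental segment of $\axis(g)$.

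The last step is a tubular-neighborhood version of Margulis's orbit counting theorem. The volume of a radius-$r$ tube around a geodesic segment in $\tilde M$ grows at the same exponential rate $e^{hr}$ as a metric ball, because in pinched negative curvature the codimension-one equidistant hypersurfaces of a geodesic have the same area-growth exponent as geodesic spheres. Combined with equidistribution of the $\pi_1(M)$-orbit against these hypersurfaces, this produces a tube count of order $e^{hR/2}$, and hence
\[
\frac{1}{R}\ln |G \cdot \X \cap B_R(\X)| \longrightarrow \frac{h}{2}.
\]
The main obstacle is precisely this last equidistribution statement. The upper bound $|G \cdot \X \cap B_R(\X)| \le C(g)\, e^{hR/2}$ is comparatively easy, since the tube can be covered by $O(\ell(g))$ metric balls of radius $R/2$ centered on the fundamental segment, to which Theorem~\ref{ABEM}'s analogue in negative curvature (Margulis) applies. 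The matching lower bound, however, requires an equidistribution result transverse to $\axis(g)$: Margulis's mixing-of-geodesic-flow argument delivers it only after nontrivial adaptation, and Parkkonen--Paulin obtain it as a special case of their general equidistribution theorem for common perpendiculars of convex subsets.
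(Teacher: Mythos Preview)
The paper does not prove this statement. Theorem~\ref{conjugacy} is quoted from Parkkonen--Paulin \cite{parkkonen2015hyperbolic} purely as motivation for the Teichm\"uller-space results; no argument for it is given anywhere in the text, so there is no ``paper's own proof'' to compare your proposal against.

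That said, your outline is essentially the same strategy the paper \emph{does} carry out for its Teichm\"uller analogue (Theorem~\ref{simple version theorem} and Corollary~\ref{second corollary}): the displacement estimate $d(y,gy)=2d(y,\axis(g))+\ell(g)+O(1)$ is exactly Corollary~\ref{thick pA distance}, the reduction of the conjugacy-class count to orbit points near a fundamental segment of the axis is exactly the construction of the sets $P_R^\pm$, and the upper bound via covering by balls of radius $R/2$ is the injection $P_R^+\hookrightarrow\Omega\bigl(\tfrac{R+A}{2}\bigr)$. One substantive difference: for the lower bound you invoke equidistribution of orbit points transverse to the axis (the Parkkonen--Paulin common-perpendicular machinery), whereas the paper obtains its lower bound for $|P_R^-|$ by a direct combinatorial argument---comparing $\bigl|\Omega\bigl(\tfrac{R-2A-\lambda}{2}\bigr)\bigr|$ against the number of axes that enter a smaller ball---which needs only the ball-count asymptotic and no mixing or equidistribution. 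For the coarse limit $\tfrac{1}{R}\ln(\cdot)\to h/2$ this elementary route suffices in the negatively curved setting too, so your appeal to equidistribution, while correct, is stronger than necessary for the statement as written.
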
 

Inspired by this result, we wish to explore the lattice point asymptotics for conjugacy classes of $\Mod_{g,n}$. The Nielsen-Thurston Classification \cite{thurston1988} says every element in $\Mod_g$ is one of the three types: periodic, reducible, or pseudo-Anosov. When $f\in \Mod_{g,n}$ is a Dehn twist, a special kind of reducible element, we prove in \cite{Dehntwistpaper} that the lattice point growth for the conjugacy class of $f$ is "coarsely" asymptotic to $e^{\frac{h}{2}R}$. In this paper, we are interested in pseudo-Anosov elements. Let $PA \subset \Mod_g$ denote the subset of pseudo-Anosov elements. Maher showed pseudo-Anosov elements are generic in the following sense. 

\begin{theorem}[Maher \cite{maher2010asymptotics}] \label{Maher}
	For any $\X, \Y \in \mathcal T_g$, we have
	\begin{align*}
		\frac{|PA \cdot \Y \cap B_R(\X)|}{| \Mod_g \cdot \Y \cap B_R(\X)|} \sim 1.
	\end{align*}
\end{theorem}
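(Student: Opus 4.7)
The plan is to use Theorem \ref{ABEM} for the total count and reduce the problem to showing that non-pseudo-Anosov elements form a subleading exponential: the ratio tends to $1$ precisely when $|(\Mod_g \setminus PA) \cdot \Y \cap B_R(\X)| = o(e^{hR})$. By the Nielsen-Thurston classification, non-pseudo-Anosov elements are either of finite order or reducible. Finite-order elements fall into finitely many $\Mod_g$-conjugacy classes, each fixing a point of $\T_g$, and a centralizer and fixed-locus argument bounds the orbit points they contribute by $C e^{h' R}$ with $h' < h$, which is negligible. The task thus reduces to controlling reducible elements.

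Every reducible element $f$ preserves a canonical reduction multicurve $\gamma_f$. I would first fix an $\Mod_g$-orbit of multicurves (there are only finitely many topological types), pick a representative $\gamma$, and bound the number of reducible orbit points $h\Y \in B_R(\X)$ with $h$ in the stabilizer $\Mod_g(\gamma)$. Up to finite index, $\Mod_g(\gamma)$ is a direct product of the mapping class groups of the components of $S \setminus \gamma$ extended by the Dehn twists along $\gamma$. Using Minsky's product region description of the part of $\T_g$ where $\gamma$ is short, one can run the ABEM counting argument on this product stabilizer and obtain an estimate $|\Mod_g(\gamma) \cdot \Y \cap B_R(\X)| \leq C(\gamma) e^{h_\gamma R}$, where $h_\gamma$ is the sum of the Teichm\"uller dimensions of the components of $S \setminus \gamma$ plus $|\gamma|$. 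Cutting along any nonempty multicurve strictly drops the total Teichm\"uller dimension, so $h_\gamma < h$, giving the crucial entropy gap.

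The main obstacle is that for each fixed topological type $[\gamma]$ one must then sum the stabilizer counts over the infinitely many representatives $g\gamma$ whose conjugate reducible coset meets $B_R(\X)$, and this index set is a priori as large as $e^{hR}$, which could erase the per-stabilizer entropy gap. To avoid double-counting I would label each reducible orbit point by its canonical shortest invariant subcurve, turning the sum over translates and orbit types into an honest partition of the reducible lattice points. The thick-thin decomposition then translates the label into a geometric constraint: the Teichm\"uller geodesic from $\X$ to a reducible lattice point with a short invariant curve $\gamma$ must spend a definite fraction of its length in the $\epsilon$-thin part near $\gamma$. By the mixing of the Teichm\"uller flow that underlies Theorem \ref{ABEM}, the proportion of $\Mod_g$-lattice points whose geodesic from $\X$ lingers in any $\epsilon$-thin region tends to zero as $R \to \infty$, which should deliver $|(\Mod_g \setminus PA) \cdot \Y \cap B_R(\X)| = o(e^{hR})$ and hence the theorem. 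I expect the hardest technical step to be quantifying this mixing sharply enough that the contributions from the finitely many topological types of invariant multicurve, each with its own thin region, are controlled simultaneously rather than one type at a time.
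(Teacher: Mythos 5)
You should first note that the paper does not prove this statement at all: it is quoted verbatim from Maher \cite{maher2010asymptotics} as background, so there is no internal proof to compare against and your sketch has to stand on its own. Its first reductions are fine in spirit: by Theorem \ref{ABEM} the theorem is equivalent to showing the non-pseudo-Anosov lattice points are $o(e^{hR})$, and the per-stabilizer entropy gap (the orbit of a single multicurve stabilizer grows at rate $h_\gamma<h$) is a reasonable and true-looking observation. The crux is exactly the obstacle you identify, the sum over all conjugates/translates of these stabilizers, and your proposed resolution of it contains a genuine error.

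The geometric claim that drives your final step --- that the Teichm\"uller geodesic from $\X$ to a reducible lattice point must spend a definite fraction of its length in the $\epsilon$-thin part associated to the invariant multicurve --- is false. Take a pseudo-Anosov $F$ with $\epsilon$-thick axis, a curve $\gamma_0$, a fixed small power $m$, and set $g_k=F^kT_{\gamma_0}^mF^{-k}=T^m_{F^k\gamma_0}$. Each $g_k$ is reducible with canonical reduction curve $\gamma_k=F^k\gamma_0$, and $d(\X,g_k\Y)\to\infty$ roughly linearly in $k$; yet every subsurface coefficient of the pair $(\X,g_k\Y)$ is uniformly bounded (the annular coefficient at $\gamma_k$ is about $m$, and all other projections are controlled because the axis of $F$ is thick), so by Rafi's characterization of short curves along Teichm\"uller geodesics the segment $[\X,g_k\Y]$ stays uniformly thick and, for small $\epsilon$, never meets the $\epsilon$-thin part of $\gamma_k$ at all. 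Such ``bounded twist about a complicated curve'' families grow at rate roughly $e^{hR/2}$ --- this is precisely the conjugacy-class phenomenon of the present paper and of \cite{Dehntwistpaper} --- so they are indeed negligible against $e^{hR}$, but they are invisible to your thin-part criterion, and hence the concluding mixing/equidistribution step cannot account for all reducible elements; the sum-over-translates problem is not actually resolved. Two secondary gaps: even where the criterion applies, ``the proportion of lattice points whose geodesic lingers in an $\epsilon$-thin region tends to zero'' needs careful quantification (for fixed $\epsilon$ the proportion of geodesics that merely enter the thin part does not tend to zero; equidistribution only controls the expected fraction of time spent there, and one must let $\epsilon\to0$ jointly with $R\to\infty$), and the periodic case is asserted rather than argued. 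Maher's actual proof does lean on the ABEM counting and equidistribution machinery, but it handles non-pseudo-Anosov elements by a different geometric mechanism than thin-part lingering, which is exactly the ingredient your sketch is missing.
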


The above Theorems \ref{conjugacy}, \ref{Maher}, motivate us to explore the lattice point asymptotics for pseudo-Anosov conjugacy class subgroups. For any mapping class $\phi \in \Mod_{g,n}$, we use $[\phi] = \{f \phi f^{-1} \mid f \in \Mod_{g,n}\}$ to denote its conjugacy class. For simplicity of notation, we denote $\Gamma_R(\X, \Y, \phi) = \left|[\phi] \cdot \Y \cap B_R(\X)\right|$.

Let $C > 0$,  we say $f(R) \stackrel{C}\preceq g(R)$ if for any $\delta > 1$, there exists a $M(\delta)$ such that $\frac{1}{\delta C} \cdot f(R) \le g(R)$ for any $R \ge M(\delta)$. We say $f(R) \stackrel{C}\sim g(R)$ if $f(R) \stackrel{C}\preceq g(R)$ and $g(R) \stackrel{C}\preceq f(R)$, thus $f(R) \stackrel{1}\sim g(R)$ is the same as $f(R) \sim g(R)$. Accordingly, we simply write $\preceq, \sim$ when $C=1$. The main results of this paper are the following.

\begin{theorem}\label{main theorem}
	Fix $S_{g,n}$ and $\epsilon > 0$, there exists a constant $A>0$ such that given any $\epsilon$-thick pseudo-Anosov element $\phi$ with translation distance $\lambda \ge A$ and given any $\X, \Y$ in $\T_{g,n}$, there exists a corresponding $G(\X, \Y, \phi)$ such that
	\begin{align*}
		\Gamma_R(\X, \Y, \phi) \stackrel{G(\X, \Y, \phi)}{\sim} e^{\frac{h}{2}R}.
	\end{align*}
\end{theorem}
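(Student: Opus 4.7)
The plan is to parametrize the conjugacy class $[\phi]$ by the coset space $\Mod_{g,n}/\centralizer(\phi)$: two conjugates $f_1\phi f_1^{-1}$ and $f_2\phi f_2^{-1}$ coincide iff $f_1^{-1}f_2\in\centralizer(\phi)$, which for pseudo-Anosov $\phi$ is virtually cyclic (generated, up to finite index, by a primitive root of $\phi$). The conjugate $f\phi f^{-1}$ has Teichm\"uller axis $f\A_\phi$ and translates along it by $\lambda$, so $\Gamma_R(\X,\Y,\phi)$ reduces to a count of cosets $[f]$ satisfying $d(\X, f\phi f^{-1}\Y)\le R$.

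The central geometric step is a coarse distance formula
\[
d(\X, f\phi f^{-1}\Y)=\lambda+2\,d(\X, f\A_\phi)+O_{\X,\Y}(1),
\]
valid once $\lambda\ge A$ for a threshold $A$ depending on $\epsilon$. Because $\phi$ is $\epsilon$-thick, $\A_\phi$ lies in the $\epsilon$-thick part of $\T_{g,n}$, where Teichm\"uller geodesics are strongly contracting by results of Minsky and Rafi; this delivers the hyperbolic-style estimate $d(x, gx)\approx\lambda+2\,d(x,\axis(g))$ with uniform error. The threshold $\lambda\ge A$ ensures that the projections of $\X$ and $f\phi f^{-1}\Y$ onto $f\A_\phi$ are separated enough that no additional overlap correction appears. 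Hence $d(\X, f\phi f^{-1}\Y)\le R$ is equivalent to $d(\X, f\A_\phi)\le (R-\lambda)/2+O(1)$.

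The count then becomes $\Gamma_R\approx N(D):=|\{[f]\in\Mod_{g,n}/\centralizer(\phi):d(\X, f\A_\phi)\le D\}|$ with $D=(R-\lambda)/2+O(1)$. Fix a basepoint $\Y_\phi\in\A_\phi$ and estimate $N(D)$ by comparing to the orbit count $|\Mod_{g,n}\cdot\Y_\phi\cap B_{D'}(\X)|\sim e^{hD'}$ from Theorem \ref{ABEM}. For the upper bound, in each coset $[f]$ choose the representative with $f\Y_\phi$ closest to $\pi_{f\A_\phi}(\X)$, so $d(\X, f\Y_\phi)\le D+\lambda/2$, giving $N(D)\le C_1 e^{h(D+\lambda/2)}$. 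For the lower bound, count orbit points in the annulus $B_D(\X)\setminus B_{D-\lambda}(\X)$: ABEM provides about $(1-e^{-h\lambda})e^{hD}$ such points, while each coset contributes only $O(1)$ points to the annulus because its axis-translates, spaced by a divisor of $\lambda$, escape any annulus of width $\lambda$ in the thick regime. Hence $N(D)\ge C_2 e^{hD}$, and combining yields $\Gamma_R(\X,\Y,\phi)\stackrel{G(\X,\Y,\phi)}{\sim}e^{h(R-\lambda)/2}$, which is $e^{hR/2}$ up to the coarse constant $G$ absorbing the factor $e^{-h\lambda/2}$ along with the implicit constants from the distance formula.

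The principal obstacle is the coarse distance formula: since $\T_{g,n}$ is not Gromov hyperbolic, the hyperbolic-style estimate must be justified by exploiting the contracting behavior of thick axes (Minsky-Rafi) and by bounding the projection interval of $\X$ and $f\phi f^{-1}\Y$ onto $f\A_\phi$; the hypothesis $\lambda\ge A$ is used precisely to absorb sub-$\lambda$ errors and ensure the $2\,d(\X, f\A_\phi)$ term dominates. A secondary but important technical point is the annulus bound in the lower argument, which relies on the growth estimate $d(\X, p)\asymp d(\X, f\A_\phi)+|s|$ along $f\A_\phi$ (with $s$ the axis parameter of $p$ from the projection of $\X$); this itself rests on the thick-part fellow-travelling and bounded-projection theory.
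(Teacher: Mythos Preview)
Your proposal is correct and follows essentially the same route as the paper: both derive the coarse distance formula $d(\X,\psi\X)\approx\lambda+2\,d(\X,\axis(\psi))$ from Minsky's contracting property (the paper's Corollary~\ref{thick pA distance}), then obtain the upper bound by injecting conjugates into $\Omega\bigl(\tfrac{R+A}{2}\bigr)$ via a well-chosen representative, and the lower bound by invoking ABEM at radius $\approx R/2$ together with a multiplicity bound. Your annulus argument for the lower bound (orbit points in $B_D\setminus B_{D-\lambda}$, each coset contributing $O_\phi(1)$) is a somewhat cleaner repackaging of what the paper does via its type (a)/(b)/(c) decomposition of $\Omega\bigl(\tfrac{R-2A-\lambda}{2}\bigr)$, and the paper separates out the reduction to $\X=\Y\in\axis(\phi)$ whereas you absorb it into the $O_{\X,\Y}(1)$ error, but the underlying geometry is identical.
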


\begin{corollary}\label{main corollary}
	Fix $S_{g,n}$, given any pseudo-Anosov element $\phi$ and given any $\X, \Y$ in $\T_{g,n}$. There exists a power $N$ depending on $\phi$ such that for any $k \ge N$, there is a corresponding $G(\X, \Y, \phi, k)$ so that the following holds:
	\begin{align*}
		\Gamma_R(\X, \Y, \phi^k) \stackrel{G(\X,\Y, \phi, k)}{\sim} e^{\frac{h}{2}R}.
	\end{align*}
\end{corollary}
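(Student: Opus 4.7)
The plan is to derive Corollary \ref{main corollary} directly from Theorem \ref{main theorem} by producing, for each pseudo-Anosov $\phi$, a threshold $N = N(\phi)$ so that every power $\phi^k$ with $k \ge N$ satisfies the two hypotheses of the main theorem: being $\epsilon$-thick for some $\epsilon > 0$ and having translation distance at least the constant $A = A(\epsilon)$ supplied by that theorem.

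First I would fix the thickness parameter. Since the axis of a pseudo-Anosov $\phi$ in $\T_{g,n}$ projects to a closed Teichm\"uller geodesic in moduli space $\M_{g,n}$, and a closed geodesic is compact, the projection stays away from the cusps; so there is some $\epsilon = \epsilon(\phi) > 0$ such that the axis of $\phi$ lies entirely in the $\epsilon$-thick part of $\T_{g,n}$, i.e., $\phi$ is $\epsilon$-thick. Because $\phi^k$ translates along the same axis as $\phi$, every power $\phi^k$ is also $\epsilon$-thick for this same $\epsilon$.

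Next I would invoke Theorem \ref{main theorem} with this $\epsilon$ and the surface $S_{g,n}$ to obtain the constant $A > 0$. Writing $\lambda_\phi > 0$ for the Teichm\"uller translation length of $\phi$, the power $\phi^k$ has translation length $k \lambda_\phi$, so setting $N := \lceil A/\lambda_\phi \rceil$ guarantees that $\phi^k$ has translation distance at least $A$ whenever $k \ge N$. Theorem \ref{main theorem} then applies to each such $\phi^k$, yielding a constant $G(\X, \Y, \phi^k)$, which we relabel $G(\X, \Y, \phi, k)$, so that $\Gamma_R(\X, \Y, \phi^k) \stackrel{G(\X,\Y,\phi,k)}{\sim} e^{\frac{h}{2}R}$.

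The deduction itself is essentially bookkeeping; all of the substance is carried by Theorem \ref{main theorem}. The only point requiring care is the order of quantifiers: $\epsilon$ must be chosen from $\phi$ first, then $A$ is extracted from Theorem \ref{main theorem} depending only on $\epsilon$ and $S_{g,n}$, and only then can $N$ be defined from $A$ and $\lambda_\phi$. I do not anticipate a genuine obstacle here beyond verifying that the axis of a pseudo-Anosov lies in some thick part of Teichm\"uller space, which is immediate from compactness of closed geodesics in moduli space.
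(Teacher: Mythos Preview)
Your proposal is correct and follows essentially the same route as the paper: choose $\epsilon$ so that $\axis(\phi)$ lies in the $\epsilon$-thick part, obtain $A=A(\epsilon)$ from Theorem~\ref{main theorem}, take $N$ large enough that $k\lambda_\phi \ge A$, and apply the theorem to $\phi^k$. You supply a little more detail than the paper (the compactness justification for thickness and the explicit choice $N=\lceil A/\lambda_\phi\rceil$), but the argument is the same.
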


In parallel with the Theorem \ref{conjugacy} above, we note the above Theorem \ref{main theorem} and Corollary \ref{main corollary} imply the following.
\begin{corollary}\label{second corollary}
Fix $S_{g,n}$, given any pseudo-Anosov element $\phi$ and given any $\X, \Y$ in $\T_{g,n}$, for all sufficiently large $k$ we have
\begin{align*}
    \lim_{R \to \infty} \frac{1}{R}\ln \Gamma_R(\X, \Y, \phi^k) = \frac{h}{2}.
\end{align*}
\end{corollary}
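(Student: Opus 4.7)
The plan is to deduce Corollary \ref{second corollary} directly from Corollary \ref{main corollary} by unpacking the definition of the relation $\stackrel{G}{\sim}$ and passing to logarithms. Fix a pseudo-Anosov $\phi$ and $\X, \Y \in \T_{g,n}$. By Corollary \ref{main corollary}, for every $k \ge N = N(\phi)$ there is a constant $G = G(\X, \Y, \phi, k) > 0$, depending on $\X, \Y, \phi, k$ but \emph{not} on $R$, such that $\Gamma_R(\X, \Y, \phi^k) \stackrel{G}{\sim} e^{\frac{h}{2}R}$. My strategy is simply to turn this multiplicative two-sided bound into an additive bound on $\frac{1}{R}\ln \Gamma_R$ and take the limit.

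Concretely, unpacking the definitions of $\stackrel{G}{\preceq}$ and $\stackrel{G}{\sim}$ given in the introduction, the relation above asserts that for every $\delta > 1$ there exists $M(\delta)$ such that for all $R \ge M(\delta)$,
\[
\frac{1}{\delta G}\, e^{\frac{h}{2}R} \;\le\; \Gamma_R(\X, \Y, \phi^k) \;\le\; \delta G \cdot e^{\frac{h}{2}R}.
\]
Applying $\frac{1}{R}\ln(\cdot)$ to each side yields
\[
\frac{h}{2} - \frac{\ln(\delta G)}{R} \;\le\; \frac{1}{R}\ln \Gamma_R(\X, \Y, \phi^k) \;\le\; \frac{h}{2} + \frac{\ln(\delta G)}{R}.
\]
Letting $R \to \infty$ with $\delta$ and $G$ held fixed forces both outer bounds to $\frac{h}{2}$, which gives $\lim_{R \to \infty} \frac{1}{R}\ln \Gamma_R(\X, \Y, \phi^k) = \frac{h}{2}$ as claimed.

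There is essentially no geometric obstacle at this stage: all of the real content (construction of $\phi^k$ from $\phi$, the $\epsilon$-thickness and large-translation-distance regime required by Theorem \ref{main theorem}, and the coarse counting estimate itself) is already contained in Theorem \ref{main theorem} and Corollary \ref{main corollary}. The only subtlety to check in the present deduction is that $G(\X, \Y, \phi, k)$ is a genuine constant independent of $R$, so that $\frac{\ln(\delta G)}{R}$ vanishes in the limit; this is precisely what the statement of Corollary \ref{main corollary} affords, and it is why the weaker "$\log/R$" asymptotic of Corollary \ref{second corollary} follows formally, even though the stronger assertion $\Gamma_R \sim e^{\frac{h}{2}R}$ (with implicit constant $1$) is not claimed.
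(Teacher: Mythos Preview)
Your proposal is correct and follows essentially the same approach as the paper: invoke Corollary \ref{main corollary}, unpack the definition of $\stackrel{G}{\sim}$ to obtain the two-sided bound $\frac{1}{\delta G}e^{\frac{h}{2}R}\le \Gamma_R \le \delta G\, e^{\frac{h}{2}R}$, take logarithms, divide by $R$, and send $R\to\infty$. Your version is in fact slightly more direct, since you simply fix $\delta$ and use $\frac{\ln(\delta G)}{R}\to 0$, whereas the paper reparametrizes by an auxiliary $\epsilon$ before squeezing.
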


These results again indicate the similarity of Teichm\"{u}ller spaces and hyperbolic spaces in terms of lattice point asymptotics.

\subsection{Acknowledgments}
I would like to thank my advisor, Spencer Dowdall, for suggesting this project, for his guidance and consistent support throughout.

\section{Background}

Let $\text{Homeo}^+_{g,n}$ denote the group of all the orientation-preserving homeomorphisms of $S_{g,n}$ preserving the set of punctures, and let $\text{Homeo}^0_{g,n}$ denote the connected component of the identity. The mapping class group of $S_{g,n}$ is defined to be the group of isotopy classes of orientation-preserving homeomorphisms:
\begin{align*}
	\Mod_{g,n} = \text{Homeo}^+_{g,n}/ \text{Homeo}^0_{g,n} = \text{Homeo}^+_{g,n}/\ \text{isotopy} 
\end{align*}

A hyperbolic structure $\X$ on $S_{g,n}$ is a pair $(X, \phi)$ where $\phi\colon S_{g,n} \to X$ is a homeomorphism and $X$ is a hyperbolic surface. We say two hyperbolic structures $\X = (X, \phi), \Y = (Y, \psi)$ are isotopic if there is an isometry $I\colon X \to Y$ isotopic to $\psi \circ \phi^{-1}$.   The Teichm\"{u}ller space $\T_{g,n}$ is the set of hyperbolic structures on $S_{g,n}$ modulo isotopy. We let $\X = (X, \phi), \Y = (Y, \psi)$ denote elements in $\T_{g,n}$. Given any $\X, \Y \in \T_{g,n}$, the Teichm\"{u}ller distance between them is defined to be
\begin{align*}
	d_\T(\X, \Y) = \frac{1}{2} \inf_{f \sim \phi \circ \psi^{-1}}\log(K_f)
\end{align*}
where the infimum is over all quasi-conformal homeomorphisms $f$ isotopic to $\phi \circ \psi^{-1}$ and $K_f$ is the quasi-conformal dilatation of $f$. Equipped with the Teichm\"{u}ller metric, the Teichm\"{u}ller space is a complete, unique geodesic metric space.

Given any $\X = (X, \phi) \in \T_{g,n}$ and given any isotopy class $\gamma$ of nontrivial simple closed curves on $S_{g,n}$, there exists a unique geodesic in the free homotopy class of $\phi(\gamma)$ on $X$. We define $\ell_{X}\left(\phi(\gamma)\right)$ to the length of this unique geodesic and define $\ell_\X(\gamma) = \ell_{X}\left(\phi(\gamma)\right)$. A pants decomposition of the surface $S_{g,n}$ is a collection of pairwise disjoint non-trivial simple closed curves $\gamma_1, \cdots, \gamma_{3g-3+n}$ on $S_{g,n}$, together they decompose the surface $S_{g,n}$ into $2g+n-2$ pairs of pants. Using pants decomposition and by introducing Fenchel-Nielsen coordinates, Fricke \cite{FrickeKlein} showed that $\T_{g,n}$ is homeomorphic to $\mathbb R^{6g+2n-6}$.

The mapping class group acts isometrically on $\T_{g,n}$ by changing the marking $(f, (X,\phi)) \mapsto (X,\phi \circ f^{-1})$. This action is properly discontinuous but not cocompact. The quotient $\M_{g,n} = \T_{g,n} / \Mod_{g,n}$ is called the moduli space, and it is a non-compact orbifold parameterizing hyperbolic surfaces homeomorphic to $S_{g,n}$.

Given any $\epsilon > 0$, the $\epsilon$-thick part of Teichm\"{u}ller space is defined to be
\begin{align*}
	\T^\epsilon_{g,n} = \{\X \in \T_{g,n} \mid \ell_\X(\alpha) \ge \epsilon \text{\ for any simple closed curve\ } \alpha \text{\ on\ } S_{g,n}\}
\end{align*}
and consequently the $\epsilon$-thick part of moduli space is $\M^\epsilon_{g,n} = \T_{g,n}^\epsilon / \Mod_{g,n}$. The Mumford compactness criterion \cite{10.2307/2037802} says $\M^\epsilon_{g,n}$ is compact for any $\epsilon >0$.

Similar to hyperbolic isometrics acting on hyperbolic space, each pseudo-Anosov element $\phi \in \Mod_{g,n}$ acts on $\T_{g,n}$ by translating along its corresponding bi-infinite geodesic axis, denoted as $\axis(\phi)$ with translation distance denoted as $\lambda(\phi)$. Moreover, we say a pseudo-Anosov element $\phi \in \Mod_{g,n}$ is called $\epsilon$-thick if its axis $\axis(\phi) \subset \T_{g,n}^\epsilon$.

For any $r>0$ and for every closed set $W \subset \T_{g,n}$, denote $\N_r(W)$ the $r$-neighborhood of $W$. For every closed set $C \subset \T_{g,n}$, the closest point projection map is defined as follows
\begin{align*}
\pi_C(x) = \{y \in C \mid d(x,y) = d(x, C) = \inf_{z \in C} d(x,z) \}.
\end{align*}

As one of the early works exploring negative curvature in Teichm\"{u}ller space, the result below from Minsky \cite{minsky1994quasiprojections} says that $\epsilon$-thick geodesics in Teichm\"{u}ller space satisfy the strongly contracting property.

\begin{theorem}[Minsky \cite{minsky1994quasiprojections}] \label{MinskyProjection}
There exists a constant $A > 0$ depending on $\epsilon, \chi(S)$ such that if $\mathcal L$ is an $\epsilon$-thick geodesic in $\T_{g,n}$ and $d(\X, \mathcal L) > A$, then we have
\begin{align*}
\diam\left(\pi_{\mathcal L}\left(\mathcal N_{d\left(\X, L\right)-A}\left(\X\right)\right)\right) \le A
\end{align*}
for any $\X \in \T_{g,n}$.
\end{theorem}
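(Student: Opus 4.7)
The plan is to adapt to thick Teichm\"{u}ller geodesics the classical proof, going back to Morse in the hyperbolic setting, that nearest-point projections onto geodesics are strongly contracting. The role of negative curvature is played by an \emph{exponential divergence estimate} for $\epsilon$-thick Teichm\"{u}ller geodesics, which is really the heart of Minsky's paper. In its sharpest form, the estimate produces a coarse hyperbolic cosine law: if $p, q$ lie on an $\epsilon$-thick geodesic $\mathcal L$ with $d(p, q) = D$, and $Y_1, Y_2$ lie at distances $r_1, r_2$ from $p, q$ respectively along geodesics arriving at $\mathcal L$ ``coarsely perpendicularly'', then
\[ d(Y_1, Y_2) \ge \cosh^{-1}\!\left(\cosh(D)\cosh(r_1)\cosh(r_2) - \sinh(r_1)\sinh(r_2)\right) - C_0, \]
with $C_0$ depending only on $\epsilon$ and $\chi(S_{g,n})$. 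Its proof uses the $\mathrm{SL}(2,\mathbb R)$-action on unit-area quadratic differentials together with Mumford compactness of $\M_{g,n}^\epsilon$, which renders the Teichm\"{u}ller geodesic flow uniformly hyperbolic over the thick part.

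Granting this, I would argue as follows. Fix $\X$ with $T = d(\X, \mathcal L) > A$, pick $p \in \pi_{\mathcal L}(\X)$, and let $\Y \in \mathcal N_{T - A}(\X)$ with $q \in \pi_{\mathcal L}(\Y)$. Set $D = d(p, q)$ and $r = d(\Y, q)$. A first-variation argument shows that both $[\X, p]$ and $[\Y, q]$ arrive at $\mathcal L$ coarsely perpendicularly: were either to meet $\mathcal L$ at a definite non-orthogonal angle, a short slide of the foot along $\mathcal L$ would strictly decrease the distance, contradicting minimality of the projection. Substituting the constraint $d(\X, \Y) \le T - A$ and the projection bound $d(\X, q) \ge T$ into the coarse cosine law and then optimizing over the free parameter $r$ (the maximum occurs near $r \approx A$ for $T$ large) yields $\cosh(D) \le 1 + C_1 e^{-2A}$, hence $D \le C_2 e^{-A}$. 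By the triangle inequality any two projections of points of $\mathcal N_{T-A}(\X)$ differ by at most $2 C_2 e^{-A}$, so choosing $A$ large enough in terms of $\epsilon, \chi$ forces the claimed bound $\diam\bigl(\pi_{\mathcal L}(\mathcal N_{T-A}(\X))\bigr) \le A$.

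The main obstacle is plainly the exponential divergence estimate and the resulting coarse cosine law; the optimization in the second paragraph is a routine calculus exercise once the estimate is in hand. A direct derivation along Minsky's original lines requires quantitative control over how extremal lengths of measured foliations evolve along Teichm\"{u}ller geodesics, together with explicit bounds on the stable/unstable decomposition of the Teichm\"{u}ller geodesic flow over the thick part of moduli space. A more modern route would instead deduce the strongly contracting property from the Rafi distance formula and Behrstock's inequality for subsurface projections in the curve complex, transported back to Teichm\"{u}ller distance via the Masur--Minsky hierarchy machinery.
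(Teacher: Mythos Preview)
The paper does not prove this statement at all: Theorem~\ref{MinskyProjection} is quoted as a background result from Minsky's paper \cite{minsky1994quasiprojections} and is used as a black box, so there is no ``paper's own proof'' to compare your proposal against. Your sketch is a reasonable outline of the ideas behind Minsky's original argument (exponential divergence of $\epsilon$-thick Teichm\"{u}ller geodesics leading to strong contraction of the nearest-point projection), but for the purposes of this paper no proof is expected or required here.
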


For $\mathcal L$ a geodesic in $\T_{g,n}$, we let $d_\pi^{\mathcal L}(C, W) = \diam\left(\pi_{\mathcal L}\left(C\right) \cup \pi_{\mathcal L}\left(W\right)\right)$. We can pick the constant $A$ in Theorem \ref{MinskyProjection} in a way so that the following holds.

\begin{corollary}[Arzhantseva, Cashen, and Tao, \cite{Arzhantseva_2015}] \label{MinskyCorollary}
Let $\mathcal L$ be an $\epsilon$-thick geodesic in $\T_{g,n}$ and let $\X, \Y \in \T_{g,n}$ be such that $d_\pi^{\mathcal L}(\X, \Y) > A$, then
\begin{align*}
d\left(\X,\Y\right) \ge d\left(\X, \pi_{\mathcal L}\left(\X\right)\right)+d_\pi^{\mathcal L}(\X, \Y)+d\left(\pi_{\mathcal L}\left(\Y\right), \Y\right) - A.
\end{align*}
Moreover, if $\Y$ happens to be on the geodesic $\mathcal L$, then $\pi_{\mathcal L}(\Y) = \{\Y\}$ and
\begin{align*}
d(\X,\Y) \ge d\left(\X, \pi_{\mathcal L}\left(\X\right)\right)+d\left(\pi_{\mathcal L}\left(\X\right), \Y\right) - A.
\end{align*}
\end{corollary}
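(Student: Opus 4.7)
The plan is to analyze the projection $\pi_{\mathcal{L}}$ of the Teichm\"{u}ller geodesic $\gamma \colon [0, D] \to \T_{g,n}$ from $\X$ to $\Y$ (with $D = d(\X, \Y)$) via iterated applications of Theorem \ref{MinskyProjection}. Writing $p \in \pi_{\mathcal{L}}(\X)$ and $q \in \pi_{\mathcal{L}}(\Y)$ for representative projection points, the goal is to show $D \ge d(\X, p) + d(p, q) + d(q, \Y) - A$ after possibly enlarging the contracting constant $A$; the ``moreover'' clause is then automatic because $q = \Y$ and $d(q, \Y) = 0$ when $\Y \in \mathcal{L}$.

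The core input is the following reformulation of Theorem \ref{MinskyProjection}: whenever $d(\Z, \mathcal{L}) > A$, every $\Z' \in \N_{d(\Z, \mathcal{L}) - A}(\Z)$ satisfies $d(\pi_{\mathcal{L}}(\Z'), \pi_{\mathcal{L}}(\Z)) \le A$. Applied at $\Z = \X$, this forces $\pi_{\mathcal{L}}(\gamma(t))$ to remain within $A$ of $p$ for all $t \le d(\X, p) - A$; symmetric reasoning at $\Y$ gives the analogous statement near $q$. Because the hypothesis $d_\pi^{\mathcal{L}}(\X, \Y) > A$ prevents these two ``shadows'' from overlapping, I would then extract times $s_1 \le s_2$ in $[0, D]$ at which $\gamma(s_1), \gamma(s_2) \in \N_{O(A)}(\mathcal{L})$, with $\pi_{\mathcal{L}}(\gamma(s_1))$ within $O(A)$ of $p$ and $\pi_{\mathcal{L}}(\gamma(s_2))$ within $O(A)$ of $q$.

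With $s_1, s_2$ in hand, three applications of the triangle inequality (using that $\gamma(s_i)$ lies $O(A)$-close to $p$ and $q$ respectively) yield
\begin{align*}
d(\X, \gamma(s_1)) &\ge d(\X, p) - O(A), \\
d(\gamma(s_1), \gamma(s_2)) &\ge d(p, q) - O(A), \\
d(\gamma(s_2), \Y) &\ge d(q, \Y) - O(A).
\end{align*}
Summing and using that $\gamma$ is a geodesic so these three distances add to $D$ produces the desired inequality, once the cumulative additive errors are absorbed into a single constant $A$ (permissible since $A$ is only required to depend on $\epsilon$ and $\chi(S)$). The main obstacle I anticipate is the rigorous production of $s_1$ and $s_2$: since $\pi_{\mathcal{L}} \circ \gamma$ need not be continuous and the projection sets could a priori jump, one must partition $[0, D]$ into subintervals on which Theorem \ref{MinskyProjection} pins down the projection to within $A$, and then run a pigeonhole/transition argument that locates the passage from the ``near $p$'' regime to the ``near $q$'' regime inside the $O(A)$-neighborhood of $\mathcal{L}$. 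This transition-time argument is the technical heart of the statement and is precisely the mechanism by which strong contraction forces quadrilateral thinness for geodesics in $\T_{g,n}$.
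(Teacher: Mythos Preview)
The paper does not actually prove this corollary: it is stated with a citation to Arzhantseva, Cashen, and Tao \cite{Arzhantseva_2015} and used as a black box, so there is no ``paper's own proof'' to compare against. Your outline is the standard derivation of the distance inequality from the strongly contracting property and is essentially how the cited reference proceeds, so your approach is correct and appropriate.

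One small comment: your sketch absorbs an $O(A)$ additive error into a new constant still called $A$, which is exactly what the paper means when it says ``We can pick the constant $A$ in Theorem \ref{MinskyProjection} in a way so that the following holds''; just be explicit that the $A$ in the corollary may be a fixed multiple of the $A$ in Theorem \ref{MinskyProjection}. The transition-time argument you flag as the technical heart is indeed the only nontrivial step, and the standard way to handle it is to let $s_1 = \sup\{t : d(\gamma(t),\mathcal{L}) \ge d(\X,\mathcal{L}) - t\}$ (or an analogous last time the contraction hypothesis at $\X$ still applies), observe that $d(\gamma(s_1),\mathcal{L}) \le A$ by continuity of $t \mapsto d(\gamma(t),\mathcal{L})$, and similarly for $s_2$; no genuine pigeonhole is needed.
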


For any pseudo-Anosov element $\phi \in \Mod_{g,n}$,  we denote $\pi_{\axis(\phi)}$ as $\pi_\phi$. Since $\phi$ acts by translation along its axis, it commutes with the projection map $\pi_\phi$. That is, for any $\X \in \T_{g,n}$, we have $\pi_{\phi}(\phi(\X)) = \phi(\pi_{\phi}(\X))$.

By using Theorem \ref{MinskyProjection} and Corollary \ref{MinskyCorollary}, one can show if an $\epsilon$-thick pseudo-Anosov element $\psi$ has sufficiently large translation length, then the distance it translates a point is roughly twice the distance from the point to the axis. See Figure \ref{pic1} for an illustration.

\begin{figure}[h]
	\begin{center}
		\begin{tikzpicture}
			\node[anchor=south west,inner sep=0] at (0,0) {\includegraphics[scale=0.35]{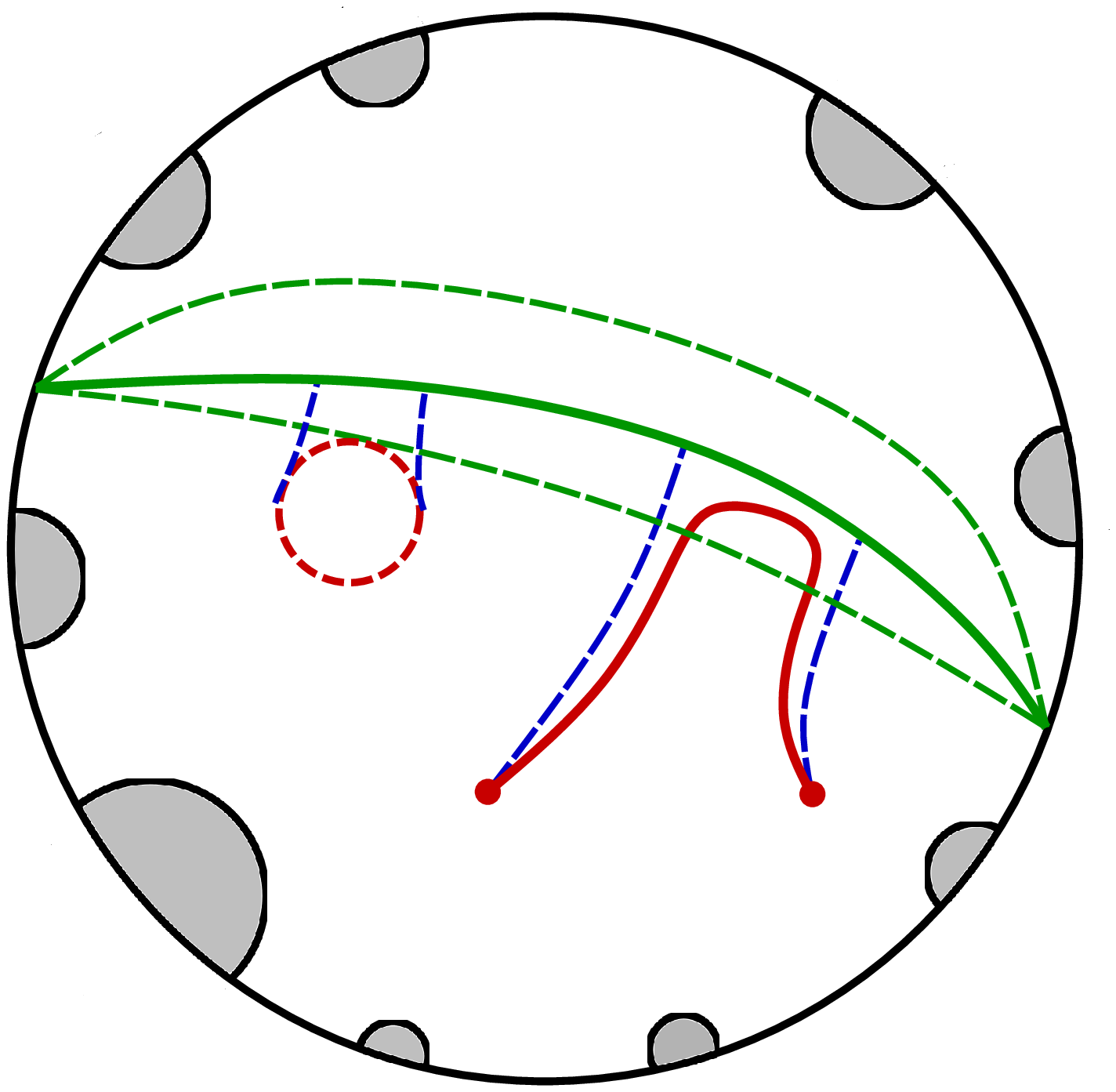}};
			\node at (1.85, 2.6){\textcolor{red}{$B$}};
			\node at (5.7, 1.5){\textcolor{OliveGreen}{$\axis(\psi)$}};
			\node at (2.5, 1){\textcolor{red}{$\X$}};
			\node at (3.9, 1){\textcolor{red}{$\psi(\X)$}};
		\end{tikzpicture}	
	\end{center}
	\caption{Shaded area are $\epsilon$-thin parts. Given a $\epsilon$-thick pseudo-Anosov element $\psi$ with $\lambda(\psi) > A$, the diameter of projection of any balls like $B$ to $\axis(\psi)$ is bounded by $A$, see Theorem \ref{MinskyProjection}. The geodesic from $\X$ to $\psi(\X)$ fellow travels $\axis(\psi)$, see Corollary \ref{thick pA distance}.}
	\label{pic1}
\end{figure}

\begin{corollary}\label{thick pA distance}
Let $\phi$ be a $\epsilon$-thick pseudo-Anosov element with translation distance $\lambda(\phi) > A$. Then for any $\X \in \T_{g,n}$ and for any $\psi \in [\phi]$, we have
\begin{align*}
 2d(\X, \pi_{\psi}(\X)) + \lambda(\phi) - A \le d(\X, \psi(\X)) \le 2d(\X, \pi_{\psi}(\X)) + \lambda(\phi) + 2A.
\end{align*}
\end{corollary}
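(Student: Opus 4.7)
The plan is to derive both inequalities from a single geometric picture: the axis $\axis(\psi)$, with $\X$ and $\psi(\X)$ dropping down to it at points $P$ and $\psi(P)$. As setup, I will first note that since $\psi = g\phi g^{-1}$ for some $g \in \Mod_{g,n}$ and the mapping class group acts by isometries preserving the $\epsilon$-thick locus, the axis $\axis(\psi) = g \cdot \axis(\phi)$ is again an $\epsilon$-thick geodesic with translation length $\lambda(\psi) = \lambda(\phi) > A$. Choose any representative $P \in \pi_\psi(\X)$. By the equivariance $\psi \circ \pi_\psi = \pi_\psi \circ \psi$ recorded just before the corollary, $\psi(P) \in \pi_\psi(\psi(\X))$, and since $P$ lies on $\axis(\psi)$ we have exactly $d(P, \psi(P)) = \lambda(\phi)$.

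The upper bound then falls out of the triangle inequality applied along the concatenation $\X \to P \to \psi(P) \to \psi(\X)$. The isometric action of $\psi$ gives $d(\psi(P), \psi(\X)) = d(P, \X) = d(\X, \pi_\psi(\X))$, yielding
\begin{align*}
d(\X, \psi(\X)) \le 2\, d(\X, \pi_\psi(\X)) + \lambda(\phi),
\end{align*}
which is in fact slightly stronger than the claimed $2d(\X,\pi_\psi(\X)) + \lambda(\phi) + 2A$.

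For the lower bound I would invoke Corollary \ref{MinskyCorollary} on the $\epsilon$-thick geodesic $\mathcal L = \axis(\psi)$ with the pair $\X$, $\psi(\X)$. Its hypothesis $d_\pi^{\mathcal L}(\X, \psi(\X)) > A$ is immediate from the setup above, because $P$ and $\psi(P)$ witness $d_\pi^{\mathcal L}(\X, \psi(\X)) \ge d(P, \psi(P)) = \lambda(\phi) > A$. Plugging into the corollary and using once more the isometric identity $d(\psi(\X), \pi_\psi(\psi(\X))) = d(\X, \pi_\psi(\X))$, the three right-hand terms collapse to exactly $2\, d(\X, \pi_\psi(\X)) + \lambda(\phi) - A$, as required.

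I do not foresee a serious obstacle: the entire argument is a direct application of the strongly contracting projection property packaged in Corollary \ref{MinskyCorollary}. The only mild subtlety is that $\pi_\psi(\X)$ may a priori be a set of positive diameter, so one must be careful to fix specific representatives $P$ and $\psi(P)$ rather than treating the projection as a point; this causes no real friction, since $d(\X, \pi_\psi(\X)) = d(\X, \axis(\psi))$ is intrinsically well defined and the projection diameter $d_\pi^{\mathcal L}$ is a set quantity by definition.
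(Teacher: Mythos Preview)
Your argument is correct and follows essentially the same route as the paper: the triangle inequality for the upper bound and Corollary~\ref{MinskyCorollary} for the lower bound, after observing that conjugation preserves $\epsilon$-thickness and translation length. By fixing specific representatives $P \in \pi_\psi(\X)$ and $\psi(P) \in \pi_\psi(\psi(\X))$ at distance exactly $\lambda(\phi)$, you in fact obtain the slightly sharper upper bound $2d(\X,\pi_\psi(\X)) + \lambda(\phi)$, whereas the paper works with the full projection diameter $d_\pi^\psi(\X,\psi(\X)) \le \lambda(\phi) + 2A$ and picks up the extra $2A$.
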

\begin{proof}
Since translation distance is invariant under conjugation, $\lambda(\psi) = \lambda(\phi) > A$ for any $\psi \in [\phi]$. Thus we have
\begin{align*}
d_\pi^\psi(\X, \psi(\X)) = \diam(\pi_{\psi}(\X) \cup \pi_{\psi}(\psi(\X))) = \diam (\pi_{\psi}(\X) \cup \psi(\pi_{\psi}(\X)))
\end{align*}
where $\lambda(\phi) \le \diam \left(\pi_{\psi}\left(\X\right) \cup \psi \left(\pi_{\psi}\left(\X\right)\right)\right) \le \lambda(\phi)+2A$. Take any $\X \in \T_{g,n}$, by the triangle inequality, we have
\begin{align*}
d(\X,\psi(\X)) \le &d(\X, \pi_{\psi}(\X))+ d_\pi^\psi(\X, \psi(\X))+d(\psi(\X), \pi_{\psi}(\psi(\X))) \\
\le& 2d(\X, \pi_{\psi}(\X))+ \lambda(\phi) +2A.
\end{align*}
Meanwhile we can apply the previous Corollary \ref{MinskyCorollary} and get
\begin{align*}
d(\X,\psi(\X)) \ge &d(\X, \pi_{\psi}(\X))+d_\pi^\psi(\X, \psi(\X))+d(\psi(\X), \pi_{\psi}(\psi(\X)))-A \\
\ge& 2d(\X, \pi_{\psi}(\X))+ \lambda(\phi)-A.
\end{align*}
The result follows.
\end{proof}

\section{Proof of the main theorem}

By Theorem \ref{ABEM}, for any $\X \in \T_{g,n}$, we have
\begin{align*}
	\left|\Mod_{g,n} \cdot \X \cap B_r(\X)\right| \sim e^{hr}.
\end{align*}

For any $r > 0$, define the set
\begin{align*}
	\Omega_r(\X) = \{f \in \Mod_{g,n} \mid d(\X, f\X) \le r\}
\end{align*}
and denote $N$ the maximal order of point stabilizer subgroups in $\Mod_{g,n}$ \cite{kerckhoff1980}. It follows that
\begin{align*}
	\left|\Mod_{g,n} \cdot \X \cap B_r(\X)\right| \le &\left|\Omega_r(\X)\right| \le N\cdot \left|\Mod_{g,n} \cdot \X \cap B_r(\X)\right|, \\
	e^{hr} \preceq &\left|\Omega_r(\X)\right| \preceq N \cdot e^{hr}.
\end{align*}
Moreover, given any $\phi \in \Mod_{g,n}$, we have
\begin{align*}
	\Gamma_r(\X, \Y, \phi) \le \left| [\phi] \cap \Omega_r(\X) \right| \le N \cdot \Gamma_r(\X, \Y, \phi).
\end{align*}
Combining things together, we have
\begin{align}
	\frac{1}{N} \cdot \left|[\phi] \cap \Omega_{r}(\X)\right| \le \Gamma_r(\X, \Y, \phi) \le \left|[\phi] \cap \Omega_{r}(\X)\right|. \label{upper and lower bound 1}
\end{align}

We first prove a simplified version of the main theorem.

\begin{theorem}\label{simple version theorem}
	For any $S_{g,n}$ and $\epsilon > 0$, there exists a constant $A>0$ such that given any $\epsilon$-thick pseudo-Anosov element $\phi$ with translation distance $\lambda \ge A$ and given any $\X \in \axis(\phi)$, there exists a corresponding constant $G(\X, \phi) > 0$ such that
	\begin{align*}
		\Gamma_R(\X, \X, \phi) \stackrel{G(\X, \phi)}{\sim} e^{\frac{h}{2}R}
	\end{align*}
\end{theorem}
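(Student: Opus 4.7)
The plan is to reduce $\Gamma_R(\X,\X,\phi)$ to a count of $\Mod_{g,n}$-orbit points in an $r$-neighborhood of $\axis(\phi)$ modulo the centralizer, with $r := (R-\lambda)/2$, and then derive matching upper and lower bounds from Theorem~\ref{ABEM}. Let $\phi_0$ generate the virtually cyclic centralizer $\centralizer(\phi)$ with translation length $\lambda_0 \le \lambda$.

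First I would reduce: for any conjugate $\psi = g\phi g^{-1}$, applying the isometry $g^{-1}$ and Corollary~\ref{thick pA distance} gives $d(\X,\psi\X) = d(g^{-1}\X, \phi g^{-1}\X) = 2\,d(g^{-1}\X, \axis(\phi)) + \lambda \pm O(A)$, so $\psi\X \in B_R(\X)$ is equivalent, up to an additive constant, to $d(g^{-1}\X, \axis(\phi)) \le r$. Parameterizing $[\phi]$ by left cosets $g\centralizer(\phi)$, substituting $h = g^{-1}$, and absorbing the point-stabilizer factor $N$ from (\ref{upper and lower bound 1}) gives
\[
\Gamma_R(\X,\X,\phi) \stackrel{N}{\sim} |\Mod_{g,n}\X \cap F|, \qquad F := \pi_\phi^{-1}([\X,\phi_0\X]) \cap \N_r(\axis(\phi)),
\]
a fundamental domain for $\centralizer(\phi)$ acting on the $r$-tube. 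Since $F \subset B_{r+\lambda_0}(\X)$ by the triangle inequality, Theorem~\ref{ABEM} yields the upper bound $|\Mod_{g,n}\X \cap F| \preceq e^{h(r+\lambda_0)}$, whence $\Gamma_R \stackrel{G_1}{\preceq} e^{hR/2}$ with $G_1 = G_1(\X,\phi)$.

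The hard part will be the matching lower bound, which requires showing that $|\Mod_{g,n}\X \cap F|$ recovers most of the ABEM count for $B_r(\X)$. The idea is to stratify $\T_{g,n} = \bigsqcup_t S_t$ by $S_t := \pi_\phi^{-1}([t\lambda_0,(t+1)\lambda_0])$, so that $F = S_0 \cap \N_r(\axis(\phi))$. When $\lambda_0 \ge 2A$, Corollary~\ref{MinskyCorollary} confines any $\Z \in S_t \cap B_r(\X)$ with $|t|\ge 1$ to a slab of axial length $\lambda_0$ and perpendicular depth at most $r - (|t|-1/2)\lambda_0 + A$; covering this slab by $O(\lambda_0)$ Teichm\"uller balls and applying Theorem~\ref{ABEM} yields the slab estimate
\[
|S_t \cap B_r(\X) \cap \Mod_{g,n}\X| \preceq \lambda_0 \cdot e^{h(r-(|t|-1/2)\lambda_0 + A)}.
\]
Summing over $|t|\ge 1$ produces a geometric series in $e^{-h\lambda_0}$, and once $\lambda_0$ exceeds a threshold depending only on $A$ and $h$, this tail sum is dominated by the ABEM main term $|B_r(\X)\cap\Mod_{g,n}\X| \succeq e^{hr}$, so subtracting leaves $|\Mod_{g,n}\X \cap F| \succeq e^{hr}$. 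The chief obstacle is precisely this subtraction step: the constant $A$ in the theorem must be enlarged so that the hypothesis $\lambda \ge A$ forces $\lambda_0$ (equal to $\lambda$ in the primitive case) above the threshold needed to make the geometric tail negligible against the ABEM main term, thereby yielding the matching constant $G_2(\X,\phi)$ for the lower bound.
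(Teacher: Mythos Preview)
Your approach takes a genuinely different route from the paper's: you pull everything back to the fixed axis $\axis(\phi)$ and count orbit points $g^{-1}\X$ landing in a fundamental-domain slab for $\centralizer(\phi)$, whereas the paper pushes forward, projecting the fixed point $\X$ onto the varying axes $\axis(\psi)$ and counting those axes whose distance to $\X$ lies in a radial shell of auxiliary depth $L$. For the upper bound the two pictures are equivalent, and your containment $F\subset B_{r+\lambda_0}(\X)$ is if anything cleaner than the paper's explicit injection $P_R^+\hookrightarrow\Omega\bigl((R+A)/2\bigr)$.

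The lower bound, however, has a real gap. You correctly flag the subtraction step as the chief obstacle, but your proposed resolution---enlarging $A$ so that $\lambda\ge A$ forces $\lambda_0$ above the required threshold---does not work: $\lambda_0$ is the translation length of the \emph{primitive} root $\phi_0$, and making $\lambda=\lambda(\phi)$ large (for instance by taking powers, exactly as in Corollary~\ref{main corollary}) does nothing whatsoever to $\lambda_0$. When $\lambda_0$ is small your tail $\sum_{|t|\ge1}\lambda_0\,e^{h(r-(|t|-1/2)\lambda_0+A)}$ is of order $\frac{e^{hA}}{h}\,e^{hr}$, which swamps the ABEM main term and the subtraction yields nothing.

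The repair is to run your slab argument with $\langle\phi\rangle$-slabs of width $\lambda$ rather than $\centralizer(\phi)$-slabs of width $\lambda_0$: the enlarged domain $F'=\pi_\phi^{-1}\bigl([\X,\phi\X]\bigr)\cap\N_r(\axis(\phi))$ over-counts $\Gamma_R$ only by the finite index $[\centralizer(\phi):\langle\phi\rangle]$, which is harmlessly absorbed into $G(\X,\phi)$, and now the hypothesis $\lambda\ge A$ really does make the geometric tail negligible once $A$ is enlarged depending only on $\epsilon$ and $\chi(S)$. The paper avoids the whole issue by a different device: its free shell-depth parameter $L$ is chosen \emph{after} $\lambda,A,N,h$ are fixed, large enough that $e^{hL}$ beats the linear-in-$L$ multiplicity of conjugators sharing a common axis, so no largeness of any translation length beyond $\lambda>A$ is ever invoked.
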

\begin{proof}
	Given $\phi, \X$ satisfying the assumptions. For any $R$, define
	\begin{align*}
		&P^+_R = \left\{\psi \in [\phi] \mid d\left(\X, \pi_{\psi}(\X)\right) \le \frac{R+A-\lambda}{2}\right\}, \\
		&P^-_R = \left\{\psi \in [\phi] \mid d\left(\X, \pi_{\psi}(\X)\right) \le \frac{R-2A-\lambda}{2}\right\}.
	\end{align*}
	Denote $\Omega_{r}(\X)$ as $\Omega(r)$ for simplicity, by Corollary \ref{thick pA distance} we have
	\begin{align}
		P^-_R \subset [\phi] \cap \Omega(R) \subset P^+_R. \label{upper and lower bound 2}
	\end{align}
	
	We now work towards obtaining an upper bound for $|P^+_R|$. Take any $\psi \in P^+_R$, there exists a $f \in \Mod_{g,n}$ such that $\psi = f \phi  f^{-1}$. Since $\X \in \axis(\phi)$, $f(\X)$ therefore lies on the $\axis(\psi)$. In particular, this means there exists a $k \in \mathbb{Z}$ such that
	\begin{align*}
		&d\left(\psi^k \circ f(\X), \pi_{\psi}(\X)\right) \le \frac{\lambda}{2}, \\
		&d\left(\psi^k \circ f(\X), \X\right) \le d\left(\psi^k \circ f(\X), \pi_{\psi}(\X)\right) + d\left(\X, \pi_{\psi}(\X)\right) \le \frac{R+A}{2}.
	\end{align*}
	See Figure \ref{pic2} for an example.
	
	\begin{figure}[h]
	\begin{center}
		\begin{tikzpicture}
			\node[anchor=south west,inner sep=0] at (0,0) {\includegraphics[scale=0.35]{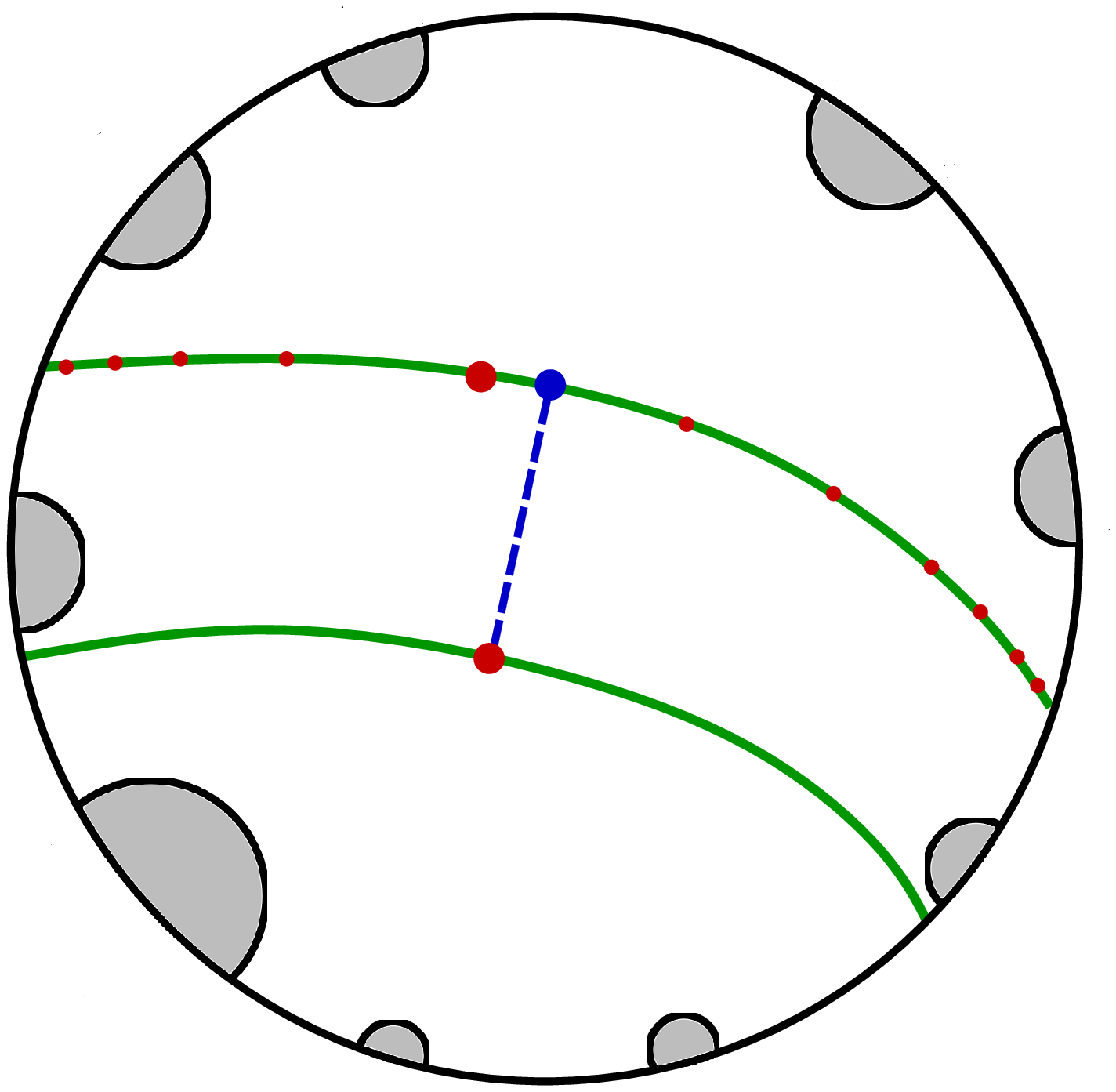}};
			\node at (3.2, 3.45){\textcolor{blue}{$\pi_{\psi}(\X)$}};
			\node at (5.8, 1.5){\textcolor{OliveGreen}{$\axis(\psi)$}};
			\node at (-0.2, 1.5){\textcolor{OliveGreen}{$\axis(\phi)$}};
			\node at (2.2, 1.7){\textcolor{red}{$\X$}};
			\node at (4.3, 2.2){\textcolor{red}{$x_0$}};
			\node at (3.9, 2.5){\textcolor{red}{$x_1$}};
			\node at (3.3, 2.8){\textcolor{red}{$x_2$}};
			\node at (2.3, 3){\textcolor{red}{$x_3$}};
			\node at (1.65, 3.05){\textcolor{red}{$x_4$}};
			\node at (1.1, 3.05){\textcolor{red}{$x_5$}};
		\end{tikzpicture}	
	\end{center}
	\caption{Each $x_i$ denotes $\psi^i \circ f(x)$ and distance between any two adjacent $x_i$ is $\lambda$. The injective map maps $\X$ to $x_3$ since $x_3$ is the closest point to $\pi_{\psi}(\X)$ in $\{x_i\}_{i \in \mathbb{Z}}$.}
	\label{pic2}
\end{figure}

	We claim one can define an injective map from $P^+_R \to \Omega(\frac{R+A}{2})$ by sending $\psi$ to $\psi^k f$. Indeed, if there is any another $\eta \in P^+_R, \eta \neq \psi$, $\eta = h \phi h^{-1}$ for some $h \in \Mod_{g,n}$, then $h(\X) \in \axis(\eta)$ and there exists a $m \in \mathbb{Z}$ such that
	\begin{align*}
		&d(\eta^m \circ h(\X), \pi_{\eta}(\X)) \le \frac{\lambda}{2}, \\
		&d(\eta^m \circ h(\X), \X) \le \frac{R+A}{2}.
	\end{align*}
	We claim in this case $\psi^k f \neq \eta^m h$. Indeed, suppose they are equal, then
	\begin{align*}
		\psi = \psi^k \psi \psi^{-k}  =  \psi^k f \phi f^{-1} \psi^{-k} = \eta^m h \phi h^{-1} \eta^{-m} = \eta^m \eta \eta^{-m} = \eta.
	\end{align*}
	However, this contradicts $\psi \neq \eta$. This means for $R$ large, we can inject $P_R^+$ into $\Omega(\frac{R+A}{2})$, so that
	\begin{align}
		&\left|P^+_R \right| \le \left|\Omega\left(\frac{R+A}{2}\right)\right| \preceq e^{\frac{hA}{2}} \cdot e^{\frac{hR}{2}}. \label{upper bound 1}
	\end{align}
	
	To obtain the lower bound for $|P^-_R|$, we define $\A_R =\left\{\axis(\psi) \mid \psi \in P_R^- \right\}$. This gives us a surjective map $F: P^-_R \to \A_R, \psi \mapsto \axis(\psi)$, and each $\Theta \in \A_R$ has the form $\Theta = \axis( f \phi f^{-1} )$ for some $f \in \Omega(\frac{R-2A}{2})$. For any $L < \frac{R-2A-\lambda}{2}$, we define $\A_R^L =\left\{\Theta \in \A_R \mid d\left(\X, \pi_{\Theta}(\X)\right) > \frac{R-2A-\lambda}{2} - L \right\}$ so that $\A_R^L\subset \A_R$. For each $\Theta \in \A_R $, we denote $H(\Theta) = \{f \in \Omega(\frac{R-2A}{2}) \mid \axis(f\phi f^{-1}) = \Theta \}$, which is a subset of $\Omega(\frac{R-2A}{2})$.
	
	By Corollary \ref{MinskyCorollary}, for any $\Theta \in \A_R^L$, there are at most $\frac{2(L+A)}{\lambda}+2$ many $f \in H(\Theta)$ satisfying $\axis(f\phi f^{-1}) = \Theta$ since $d(\X,\pi_{\Theta}(\X)) \in \left(\frac{R-2A-\lambda}{2}-L,\frac{R-2A-\lambda}{2}\right]$. In the example of Figure \ref{pic3}, there are six such $f$ for this $\Theta$. This means
	\begin{align}
		\left|\A_R^L \right| \ge \frac{\lambda}{2(L+A+\lambda)} \cdot \sum_{\Theta \in \A_R^L} \left|  H\left(\Theta\right)\right|. \label{lower bound mess 1}
	\end{align}
	For any element $f \in \Omega(\frac{R-2A-\lambda}{2})$, let's denote $\Theta_f = \axis(f \phi f^{-1})$, then each $f$ is exactly one of the following types.
	\begin{enumerate}[label=(\alph*)]
		\item $\Theta_f$ never enters $B_{\frac{R-2A-\lambda}{2}-L}(\X)$. 
		\item $\Theta_f$ enters $B_{\frac{R-2A-\lambda}{2}-L}(\X)$ and $d(\X, f(\X)) \le {\frac{R-2A-\lambda}{2}-L}$.
		\item $\Theta_f$ enters $B_{\frac{R-2A-\lambda}{2}-L}(\X)$ and $d(\X, f(\X)) > {\frac{R-2A-\lambda}{2}-L}$.
	\end{enumerate}

	\begin{figure}[h]
	\begin{center}
		\begin{tikzpicture}
			\node[anchor=south west,inner sep=0] at (0,0) {\includegraphics[scale=0.4]{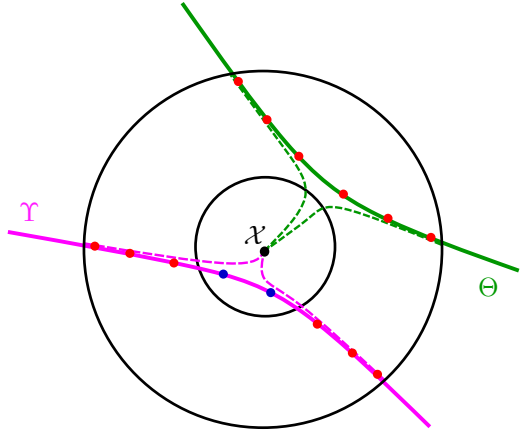}};
			\node at (6.4, 1.9){\textcolor{OliveGreen}{$\Theta$}};
			\node at (0.3, 2.9){\textcolor{BrightPurple}{$\Upsilon$}};
			\node at (3.3, 2.6){$\X$};
		\end{tikzpicture}	
	\end{center}
	\caption{$\Theta$ is of type (a) and $\Upsilon$ is of type (c). The lengths of $\Theta$ and $\Upsilon$ intersecting $B_{\frac{R-2A-\lambda}{2}}$ can be approximated by Corollary \ref{MinskyCorollary}, which showed as the dotted geodesic segments.}
	\label{pic3}
	\end{figure}

	The union of type (a) elements is $\bigsqcup_{\Theta \in \A_R^L} H(\Theta)$, and the union of type (b) elements are $\Omega\left(\frac{R-2A-\lambda}{2}-L\right) \subset \Omega\left(\frac{R-2A}{2}-L\right)$. By Corollary \ref{MinskyCorollary}, we notice there are at most $\frac{2(L+A)}{\lambda}$ many type (c) elements can share the same axis, and the numbers of axes going through $B_{\frac{R-2A-\lambda}{2}-L}(\X)$ is bounded by $|\Omega\left(\frac{R-2A}{2}-L\right)|$. In the example of Figure \ref{pic3}, there are six $f$ satisfying type (c) conditions sharing the axis $\Upsilon$. Notice there are two $f$ realize $\Upsilon = \Theta_f$ but not satisfy the type (c) assumption. Since type (a), (b), (c) elements compose $\Omega(\frac{R-2A-\lambda}{2})$, we have
	\begin{align*}
		\sum_{\Theta \in \A_R^L} \left|  H\left(\Theta\right) \right| \ge \left|\Omega\left(\frac{R-2A-\lambda}{2}\right)\right|- \left(1+ \frac{2(L+A)}{\lambda} \right) \cdot \left|\Omega\left(\frac{R-2A}{2}-L\right)\right|.
	\end{align*}
	Moreover, we let $L$ be a constant satisfy $e^{hL} > 2 \cdot e^{h\frac{\lambda}{2}}  \cdot N \left(1+\frac{2(L+A)}{\lambda}\right)$, then
	\begin{align}
			\sum_{\Theta \in \A_R^L} \left|  H\left(\Theta\right) \right| &\succeq e^{\frac{h(R-2A-\lambda)}{2}} - \left(1+ \frac{2(L+A)}{\lambda} \right) \cdot N \cdot e^{\frac{h(R-2A)}{2}-hL} \label{lower bound mess 2} \\ 
			&\succeq e^{\frac{h}{2}R} \cdot e^{-hA} \cdot \left( \frac{1}{e^{h\frac{\lambda}{2}}} - \frac{N \cdot \left(1+ \frac{2(L+A)}{\lambda} \right)}{e^{hL}}  \right) \succeq e^{\frac{h}{2}R} \cdot \frac{1}{2e^{h(\frac{\lambda}{2}+A)}}, \nonumber
	\end{align}
	and this lower bound is nontrivial.
	
	Thus, to construct the lower bound for $|P_R^-|$, we let $L$ be a constant satisfy  $e^{hL} > 2 \cdot e^{h\frac{\lambda}{2}}  \cdot N \left(1+\frac{2(L+A)}{\lambda}\right)$. Apply formulas (\ref{lower bound mess 1}) (\ref{lower bound mess 2}) from above, for $R$ large we have
	\begin{align}
		\left|P^-_R\right| &\ge |\A_R| \ge \left|\A_R^L\right| \ge \frac{\lambda}{2(L+A+\lambda)} \cdot \sum_{\Theta \in \A_R^L} \left|  H\left(\Theta\right)\right| \label{lower bound 1} \\
		&\succeq e^{\frac{h}{2}R} \cdot \frac{\lambda}{2(L+A+\lambda)e^{hA}} \cdot \frac{1}{2e^{h(\frac{\lambda}{2}+A)}}. \nonumber
	\end{align}
	
	Finally, combining formulas (\ref{upper and lower bound 1}), (\ref{upper and lower bound 2}), (\ref{lower bound 1}) we have
	\begin{align*}
		\left|[\phi] \cdot \X \cap B_R(\X)\right| \ge \frac{1}{N} \cdot \left|[\phi] \cap \Omega(R)\right| \ge \frac{1}{N} \cdot \left|P^-_R\right| \succeq G_L(\X, \phi) \cdot e^{\frac{h}{2}R}
	\end{align*}
	where
	\begin{align*}
		G_L(\X, \phi) = \frac{\lambda}{2N(L+A+\lambda)e^{hA}} \cdot \frac{1}{2e^{h(\frac{\lambda}{2}+A)}}.
	\end{align*}
	And combining formulas (\ref{upper and lower bound 1}), (\ref{upper and lower bound 2}), (\ref{upper bound 1}) we have
	\begin{align*}
		\left|[\phi] \cdot \X \cap B_R(\X)\right| &\le \left|[\phi] \cap \Omega(R)\right| \le P^+_{R} \preceq G_U(\X, \phi) \cdot e^{\frac{h}{2}R}
	\end{align*}
	where
	\begin{align*}
		G_U(\X, \phi) =	Ne^{\frac{hA}{2}}.
	\end{align*}
	Recall $f(R) \stackrel{A}\preceq g(R)$ is the same as $f(R) \stackrel{1}\preceq Ag(R)$. Thus we have
	\begin{align*}
		e^{\frac{h}{2}R} \stackrel{G^{-1}_L(\X, \phi)}\preceq \left|[\phi] \cdot \X \cap B_R(\X)\right| \stackrel{G_U(\X, \phi)}\preceq  e^{\frac{h}{2}R}
	\end{align*}
	This means by setting
	\begin{align*}
		G(\X, \phi) = \max \{G_L^{-1}(\X, \phi), G_U(\X, \phi)\}
	\end{align*}
	we obtain the desired result.
\end{proof}

Now we are ready to prove the general case.

\begin{proof}[Proof of Theorem \ref{main theorem}]
	Take any $\X, \Y \in \T_{g,n}$, and let $D$ be the maximum between $d(\X, \pi_{\phi}(\X))$ and $d(\pi_{\phi}(\X), \Y)\}$. We then have
	\begin{align*}
	&\left|[\phi] \cdot \Y \cap B_{R}(\X)\right| \ge \left|[\phi] \cdot \pi_{\phi}(\X) \cap B_{R-D}(\X)\right| \ge \left|[\phi] \cdot \pi_{\phi}(\X) \cap B_{R-2D}(\pi_{\phi}(\X))\right|, \\
	&\left|[\phi] \cdot \Y \cap B_{R}(\X)\right| \le \left|[\phi] \cdot \pi_{\phi}(\X)  \cap B_{R+D}(\X)\right| \le \left|[\phi] \cdot \pi_{\phi}(\X) \cap B_{R+2D}(\pi_{\phi}(\X))\right|.
	\end{align*}
	 By applying these inequalities and by applying Theorem \ref{simple version theorem} to $\phi$ and $\pi_{\phi}(\X)$, without loss of generality, we get the desired result by setting $G(\X, \Y, \phi) =  G(\pi_\phi(\X), \phi) \cdot e^{hD}$.
\end{proof}

\begin{proof}[Proof of Corollary \ref{main corollary}]
Given $\phi$, we pick $\epsilon$ so that $\axis(\phi)$ is in $\T_{g,k}^\epsilon$. Since $\lambda(\phi^k) = k \cdot \lambda(\phi)$ for any pseudo-Anosov element $\phi$, there exists a $N(\phi)$ such that $\lambda(\phi^k) \ge A$ for any $k \ge N(\phi)$. We now can apply Theorem \ref{main theorem}, and the corresponding error constant $G$ depends on $\X, \Y, \phi, k$.
\end{proof}

\begin{proof}[Proof of Corollary \ref{second corollary}]
Assuming the conditions, we can apply the Corollary \ref{main corollary}. This means for any $k \ge N$ and for any $\delta > 1$, there exists a $M(\delta)$ such that
\begin{align*}
   \frac{1}{\delta G(\X,\Y, \phi, k)} \cdot e^{\frac{h}{2}R} \le \Gamma_R(\X, \Y, \phi^k) \le \delta G(\X,\Y, \phi, k) \cdot e^{\frac{h}{2}R}
\end{align*}
for any $R \ge M(\delta)$. Let $\epsilon > 0$, one can pick $\delta > 0$ and pick $M(\epsilon) \ge M(\delta)$ so that
\begin{align*}
    &\delta G(\X,\Y, \phi, k) \le e^{\epsilon \frac{h}{2}R}, \\
    &e^{-\epsilon \frac{h}{2}R} \le \frac{1}{\delta G(\X,\Y, \phi, k)},
\end{align*}
for any $R \ge M(\epsilon)$. This implies for any $\epsilon >0$, we have
\begin{align*}
    &e^{(1-\epsilon)\frac{h}{2}R} \le \Gamma_R(\X, \Y, \phi^k) \le e^{(1+\epsilon)\frac{h}{2}R}, \\
    &(1-\epsilon)\frac{h}{2}R \le \ln \Gamma_R(\X, \Y, \phi^k) \le (1+\epsilon)\frac{h}{2}R, \\
    &(1-\epsilon)\frac{h}{2} \le \frac{1}{R}\ln \Gamma_R(\X, \Y, \phi^k) \le (1+\epsilon)\frac{h}{2},
\end{align*}
whenever $R \ge M(\epsilon)$. That is,
\begin{align*}
    \lim_{R \to \infty} \frac{1}{R}\ln \Gamma_R(\X, \Y, \phi^k) = \frac{h}{2}.
\end{align*}
This finishes the proof.
\end{proof}

\bibliography{references}
\bibliographystyle{amsplain}
\end{document}